\newtheorem{theorem}{Theorem}
\newtheorem{lemma}[theorem]{Lemma}
\newtheorem{proposition}[theorem]{Proposition}
\renewcommand\emptyset{\varnothing}
\begin{document}

\title{On Weak Chromatic Polynomials of Mixed Graphs}

\author{Matthias Beck}
\address{Department of Mathematics, San Francisco State University, San Francisco, CA 94132, USA}
\email{mattbeck@sfsu.edu}

\author{Daniel Blado}
\address{Department of Computing and Mathematical Sciences, California Institute of Technology, Pasadena CA 91125, USA}
\email{blado@caltech.edu}

\author{Joseph Crawford}
\address{Department of Mathematics, Morehouse College, Atlanta GA 30314, USA}
\email{josephdcrawford@yahoo.com}

\author{Ta\"ina Jean-Louis}
\address{Department of Mathematics, Amherst College, Amherst MA 01002, USA}
\email{tjeanlouis13@amherst.edu}

\author{Michael Young}
\address{Department of Mathematics, Iowa State University, Ames IA 50011, USA}
\email{myoung@iastate.edu}

\begin{abstract}
A \emph{mixed graph} is a graph with directed edges, called \emph{arcs}, and undirected edges. A $k$-coloring of the vertices is \emph{proper} if
colors from $\{1,2,\ldots,k\}$ are assigned to each vertex such that $u$ and $v$ have different colors if $uv$ is an edge, and the color of $u$ is
less than or equal to (resp.\ strictly less than) the color of $v$ if $uv$ is an arc. The \emph{weak} (resp.\ \emph{strong}) \emph{chromatic
polynomial} of a mixed graph counts the number of proper $k$-colorings. 
Using order polynomials of partially ordered sets, we establish a reciprocity theorem for weak chromatic polynomials giving
interpretations of evaluations at negative integers. 
\end{abstract}

\keywords{Weak chromatic polynomial, mixed graph, poset, $\omega$-labeling, order polynomial, combinatorial reciprocity theorem.} 

\subjclass[2000]{Primary 05C15; Secondary 05A15, 06A07.}

\date{11 June 2013}

\thanks{We thank Thomas Zaslavsky and two anonymous referees for various helpful suggestions about this paper, and 
Ricardo Cortez and the staff at MSRI for creating an ideal research environment at MSRI-UP.
This research was partially supported by the NSF through the grants DMS-1162638 (Beck),
DMS-0946431 (Young), and DMS-1156499 (MSRI-UP REU), and by the NSA through grant H98230-11-1-0213.}

\maketitle


\section{Introduction}
A \emph{mixed graph} $G = (V, E, A)$ consists of a set of vertices, $V=V(G)$, a set of undirected edges, $E=E(G)$, and a set of directed edges, $A=A(G)$. For convenience, the
elements of $E$ will be called \emph{edges} and the elements of $A$ will be called \emph{arcs}. Given adjacent vertices $u,v \in V$, an edge will be denoted by $uv$ and an arc will be denoted by $\vec{uv}$.

A \emph{$k$-coloring} of a mixed graph $G$ is a mapping $c: V \rightarrow [k]$, where $[k] := \{1, 2, \ldots, k\}$. A \emph{weak} (resp.\
\emph{strong}) \emph{proper $k$-coloring} of $G$ is a $k$-coloring such that
\[
  c(u) \neq c(v) \ \text{ if } uv \in E
  \qquad \text{ and } \qquad 
  c(u) \le c(v) \ \text{ (resp. $c(u) < c(v)$) \ if } \vec{uv} \in A \, .
\]
The \emph{weak} (resp.\ \emph{strong}) \emph{chromatic polynomial}, denoted by $\chi_G(k)$ (resp.\ $\widehat{\chi}_G(k)$), is
the number of weak (resp. strong) proper $k$-colorings of $G$. It is well known (see, e.g.,
\cite{kotekmakowskyzilber,sotskovtanaev}) that these counting functions are indeed polynomials in $k$.
Coloring problems in mixed graphs have various applications, for example in scheduling problems in which one has both
disjunctive and precedence constraints (see, e.g., \cite{furmanczykkosowkiries,hansenkuplinskydewerra,Polyproof}).

An \emph{orientation} of a mixed graph $G$ is obtained by orienting the edges of $G$, i.e., assigning one of $u$ and $v$ to be the head/tail of the edge $uv \in E$; if $v$ is the head we use
the notation $u \to v$.
(An arc $\vec{uv}$, for which we also use the notation $u \rightarrow v$, cannot be re-oriented.)
An orientation of a mixed graph is \emph{acyclic} if it does not contain any directed cycles. A mixed graph is \emph{acyclic} if all of its
possible orientations are acyclic. A coloring $c$ and an orientation of $G$ are \emph{compatible} if for every $u \to v$ in the orientation, $c(u) \le c(v)$.

A famous theorem of Stanley says that, for any graph $G=(V,E,\emptyset)$ and positive integer $k$, $(-1)^{|V|} \chi_G(-k)$
enumerates the pairs of $k$-colorings and compatible acyclic orientations of $G$ and, in particular, $(-1)^{|V|}
\chi_G(-1)$ equals the number of acyclic orientations of $G$ \cite{stanleyacyclic}; this is an example of a
\emph{combinatorial reciprocity theorem}.
More recently, Beck, Bogart, and Pham proved the following analogue of Stanley's reciprocity
theorem for the strong chromatic polynomial of a mixed graph~\cite{Golomb}:

\begin{theorem}\label{strongmg} 
For any mixed graph $G=(V,E,A)$ and positive integer $k$, $(-1)^{|V|} \widehat{\chi}_G(-k)$ equals the number of $k$-colorings of $G$, each counted with multiplicity equal to the number of compatible acyclic orientations of~$G$.
\end{theorem}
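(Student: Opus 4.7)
My plan is to reduce Theorem~\ref{strongmg} to Stanley's order-polynomial reciprocity by partitioning strong proper $k$-colorings according to the acyclic orientation they induce. Recall that for a finite poset $P$ on $n$ elements, the order polynomial $\Omega_P(k)$ counts weakly order-preserving maps $P \to [k]$, while the strict order polynomial $\Omega_P^\circ(k)$ counts strictly order-preserving maps, and Stanley's reciprocity asserts $(-1)^n \, \Omega_P^\circ(-k) = \Omega_P(k)$.

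First I would set up the following bijection. Given a strong proper $k$-coloring $c$ of $G$, orient each undirected edge $uv \in E$ from the endpoint of smaller color to the endpoint of larger color; this is well defined because $c(u) \neq c(v)$ for edges. Combining this orientation $\sigma$ of $E$ with the fixed arcs of $A$ yields a directed graph in which the color strictly increases along every directed edge, so no directed cycle can exist---the orientation is acyclic, and its transitive closure is a bona fide partial order $P_\sigma$ on $V$. The map $c \mapsto (c,\sigma)$ is a bijection from strong proper $k$-colorings of $G$ to pairs $(c,\sigma)$ in which $\sigma$ is an acyclic orientation of $G$ and $c$ is a strictly $P_\sigma$-order-preserving map to $[k]$. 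Consequently
\[
  \widehat{\chi}_G(k) \ = \ \sum_{\sigma} \Omega^\circ_{P_\sigma}(k),
\]
where the sum ranges over all acyclic orientations of $G$.

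Next I would apply Stanley's reciprocity summand by summand to obtain
\[
  (-1)^{|V|} \widehat{\chi}_G(-k) \ = \ \sum_{\sigma} \Omega_{P_\sigma}(k).
\]
The quantity $\Omega_{P_\sigma}(k)$ counts the weakly $P_\sigma$-order-preserving maps $c: V \to [k]$, which are precisely the $k$-colorings compatible with $\sigma$ in the sense defined earlier. Switching the order of summation,
\[
  (-1)^{|V|} \widehat{\chi}_G(-k) \ = \ \sum_c \#\bigl\{\sigma : \sigma \text{ is an acyclic orientation of } G \text{ compatible with } c\bigr\},
\]
which is the weighted count in the statement.

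The main obstacle is really in the setup rather than the punchline: one must cleanly justify that the orientation induced by a strong coloring is acyclic \emph{as an orientation of the mixed graph} (that is, together with the fixed arcs of $A$), and that the induced preorder has no nontrivial two-cycles so that the transitive closure produces an honest poset $P_\sigma$. Both points follow at once from the observation that any would-be directed cycle would force some color to be strictly less than itself. With that check in place, Stanley's reciprocity delivers the theorem.
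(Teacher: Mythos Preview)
Your argument is correct. Note, however, that the present paper does not itself prove Theorem~\ref{strongmg}: it is quoted from \cite{Golomb}, and the paper remarks that the proof there follows a geometric approach. So there is no in-paper proof to compare against directly.

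That said, your route via order polynomials is precisely the method the paper uses for its own main result, Theorem~\ref{main}, and in fact your proof is the streamlined version of that argument specialized to the strong case. The key simplification is that a strong proper coloring strictly increases along \emph{every} directed edge (both oriented edges and arcs), so the induced acyclic orientation is uniquely determined by the coloring and the relevant order polynomial is the strict one $\Omega^\circ_{P_\sigma}$ for a natural labeling. By contrast, in the weak case treated in Section~\ref{pst} the paper must manufacture a non-natural $\omega$-labeling (Lemma~\ref{labeling}) so that arcs correspond to weak inequalities and edges to strict ones; this is exactly the extra work your setting avoids. What your approach buys over the geometric proof in \cite{Golomb} is that it stays entirely within poset combinatorics and makes the parallel with Theorem~\ref{main} transparent; what the geometric approach buys is a uniform framework (inside-out polytopes and Ehrhart reciprocity) that handles many such reciprocity theorems at once.
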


In this paper, we complete the picture by proving a reciprocity theorem for \emph{weak} chromatic polynomials $\chi_G(k)$ of mixed graphs. A coloring $c$ and an orientation of $G$ are \emph{intercompatible} if for every $u \to v$ in the orientation,
\[
  c(u) \le c(v) \ \text{ if } uv \in E(G)
  \qquad \text{ and } \qquad
  c(u)<c(v) \ \text{ if } \vec{uv} \in A(G) \, . 
\]
Our main results is:

\begin{theorem}\label{main}
For any acyclic mixed graph $G=(V,E,A)$ and positive integer $k$, $(-1)^{|V|} {\chi}_G(-k)$ equals the number of $k$-colorings of $G$, each counted with multiplicity equal to the number of intercompatible acyclic orientations of~$G$.
\end{theorem}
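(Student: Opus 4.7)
The plan is to parallel the approach used to establish Theorem~\ref{strongmg} in~\cite{Golomb}, replacing ordinary order polynomial reciprocity with reciprocity for $(P,\omega)$-order polynomials. First, I would decompose
\[
\chi_G(k)=\sum_{O} f_O(k),
\]
summing over orientations $O$ of the undirected edges of $G$, where $f_O(k)$ counts the $k$-colorings $c$ with $c(u)<c(v)$ for each edge oriented $u\to v$ in $O$ and $c(u)\le c(v)$ for each arc $\vec{uv}\in A$. Any weak proper $k$-coloring $c$ determines a unique such orientation $O_c$ by orienting $uv$ as $u\to v$ whenever $c(u)<c(v)$, and this orientation is acyclic because $G$ itself is acyclic; conversely, every coloring counted by some $f_O$ is weak proper. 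Hence the sum is effectively over acyclic orientations.

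Next, fix an acyclic orientation $O$ and let $P_O$ be the poset on $V$ obtained as the transitive closure of the digraph $G^O$ consisting of $A$ together with the edges oriented by $O$. I would then construct a bijective labeling $\omega\colon V\to[|V|]$ satisfying $\omega(u)<\omega(v)$ for each arc $\vec{uv}\in A$ and $\omega(u)>\omega(v)$ for each edge oriented $u\to v$ in $O$. Such a labeling exists precisely when the auxiliary digraph $D_O$---obtained from $G^O$ by reversing each originally undirected edge while keeping every arc---is acyclic: any directed cycle in $D_O$ would, upon flipping exactly the edges of $G$ it traverses, yield an orientation of $G$ containing a directed cycle, contradicting the hypothesis that $G$ is acyclic. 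Any topological order of $D_O$ therefore produces the required $\omega$. Under this $\omega$, each arc is a non-descent (weak inequality) and each oriented edge is a descent (strict inequality), so the conditions defining $f_O$ on the direct relations of $G^O$ match the corresponding $(P_O,\omega)$-conditions; for transitive pairs, the $(P_O,\omega)$-conditions follow by chaining direct ones, using the acyclicity of $D_O$ to rule out any spurious strict requirement. Hence $f_O(k)=\Omega(P_O,\omega;k)$.

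Stanley's $(P,\omega)$-reciprocity then gives
\[
(-1)^{|V|}\Omega(P_O,\omega;-k)=\Omega(P_O,\bar\omega;k),
\]
where $\bar\omega=|V|+1-\omega$ is the reverse labeling, which interchanges descents and non-descents. Consequently $\Omega(P_O,\bar\omega;k)$ counts precisely the $k$-colorings intercompatible with~$O$: strict on arcs, weak on oriented edges. Summing over all acyclic orientations yields
\[
(-1)^{|V|}\chi_G(-k)=\sum_{O}\Omega(P_O,\bar\omega;k),
\]
and reindexing by pairs $(c,O)$ rewrites the right-hand side as a sum over $k$-colorings $c$ weighted by the number of acyclic orientations intercompatible with $c$, which is exactly the right-hand side of Theorem~\ref{main}.

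The crux of the argument is constructing the labeling $\omega$: this hinges on the acyclicity of the auxiliary digraph $D_O$, which is the single place where the hypothesis that $G$ is acyclic is essential. Once $\omega$ is in hand, the rest is a direct application of Stanley's $(P,\omega)$-machinery combined with the bookkeeping developed for Theorem~\ref{strongmg} in~\cite{Golomb}.
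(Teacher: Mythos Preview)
Your proposal is correct and follows essentially the same route as the paper: the decomposition $\chi_G(k)=\sum_O f_O(k)$ is the paper's Lemma~\ref{phisum}, your auxiliary digraph $D_O$ is precisely the paper's $R_i$, and taking a topological order of $D_O$ to produce $\omega$ is exactly the recursive source-removal construction in Lemma~\ref{labeling}. One minor remark: your justification that $D_O$ is acyclic is more elaborate than needed---$D_O$ is itself an orientation of $G$ (arcs fixed, edges reoriented), so acyclicity of $G$ gives the conclusion immediately; on the other hand, you are more explicit than the paper about why the $(P_O,\omega)$-conditions on transitive pairs impose nothing beyond the direct ones.
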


One can prove this theorem along somewhat similar lines to the (geometric) approach used in \cite{Golomb}, though there are subtle details that distinguish the case of weak chromatic
polynomials from the one of strong chromatic polynomials. For example, although both Theorems \ref{strongmg} and \ref{main} result in relating $k$-colorings of a mixed graph to its acyclic
orientations, the reciprocity theorem for strong chromatic polynomials applies to all mixed graphs $G$, while the reciprocity theorem for weak chromatic polynomials requires the condition
that $G$ be an acyclic mixed graph: without this condition, Theorem \ref{main} is not true.

Our proof of Theorem \ref{main} applies Stanley's reciprocity theorem for \emph{order polynomials}, stated in Section \ref{pst},
which also contains the proof of Theorem \ref{main}. In Section \ref{delcon} we give a deletion--contraction method for computing
the weak and strong chromatic polynomials for mixed graphs, as well as an example that shows Theorem \ref{main} may not hold for mixed graphs that are not acyclic.


\section{Posets, Order Polynomials, and the Proof of Theorem \ref{main}}\label{pst}

Recall that a partially ordered set (a \emph{poset}) is a set $P$ with a relation $\preceq$ that is reflexive, antisymmetric, and
transitive.
Following \cite{stanleythesis} (see also \cite[Chapter 3]{Stanley}), we define an \emph{$\omega$-labeling} of a poset with $n$
elements as a bijection $\omega : P \rightarrow [n]$, and the \emph{order polynomial} $\Omega _{P,\omega}(k)$ as 
\[
  \Omega _{P,\omega} (k) := \# \left\{ (x_1, x_2, ... , x_n) \in [k]^n : 
  \begin{array}{l}
    x_u \leq x_v \text{ if } u \preceq v \text{ and } \omega (u) < \omega (v) \\
    x_u < x_v \text{ if } u \preceq v \text{ and } \omega (u) > \omega (v) 
  \end{array} \right\} .
\]
(Here $\# S$ denotes the cardinality of the set $S$.)
Stanley \cite{stanleythesis} proved that $\Omega _{P,\omega}(k)$ is indeed a polynomial in $k$. 
The \emph{complementary labeling} to $\omega$ 
is the $\overline{\omega}$-labeling of $P$ defined by $\overline{\omega}(v) := n+1-\omega(v)$.
Thus
\[
  \Omega _{P,\overline{\omega}} (k) = \# \left\{ (x_1, x_2, ... , x_n) \in [k]^n : 
  \begin{array}{l}
    x_u < x_v \text{ if } u \preceq v \text{ and } \omega (u) < \omega (v) \\
    x_u \le x_v \text{ if } u \preceq v \text{ and } \omega (u) > \omega (v) 
  \end{array} \right\} .
\]
  
\begin{theorem}[Stanley \cite{stanleythesis}]\label{stanley_op}
$
  \Omega _{P,\omega} (-k) = (-1)^{|P|} \, \Omega_{P,\overline{\omega}} (k) \, .
$
\end{theorem}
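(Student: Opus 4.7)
The plan is to follow Stanley's classical argument that decomposes tuples counted by $\Omega_{P,\omega}(k)$ according to an associated linear extension of $P$. Call a tuple $(x_v)_{v \in P} \in [k]^n$ \emph{admissible} if it satisfies the conditions defining $\Omega_{P,\omega}(k)$. The first step is to partition the set of admissible tuples by the following rule: sort the entries into weakly increasing order, breaking ties by insisting that equal values are listed in increasing order of $\omega$, and call the resulting permutation $\pi$ of the elements of $P$. One then checks that $\pi$ is automatically a linear extension of $P$: if $u \preceq v$ in $P$, then either $x_u < x_v$, in which case $u$ precedes $v$ in $\pi$, or $x_u = x_v$, in which case admissibility forces $\omega(u) < \omega(v)$, so again $u$ precedes $v$. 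Conversely, each linear extension $\pi$ collects those sequences $1 \le x_{\pi(1)} \le x_{\pi(2)} \le \cdots \le x_{\pi(n)} \le k$ for which $x_{\pi(i)} < x_{\pi(i+1)}$ at every descent of $\omega$ along $\pi$ (otherwise the tie-breaking rule would produce a different permutation).

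The second step is to count each fiber. Let $d(\pi)$ denote the number of descents of $\omega$ along $\pi$, i.e., indices $i$ with $\omega(\pi(i)) > \omega(\pi(i+1))$. The standard substitution $y_i := x_{\pi(i)} - \#\{ j < i : \omega(\pi(j)) > \omega(\pi(j+1)) \}$ converts the constrained chain into an unconstrained weakly increasing chain $1 \le y_1 \le \cdots \le y_n \le k - d(\pi)$, yielding
\[
  \Omega_{P,\omega}(k) \;=\; \sum_\pi \binom{k + n - 1 - d(\pi)}{n},
\]
where $\pi$ ranges over linear extensions of $P$.

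The third step applies the same analysis to $\bar\omega$. Since $\omega(\pi(i)) > \omega(\pi(i+1))$ if and only if $\bar\omega(\pi(i)) < \bar\omega(\pi(i+1))$, the number of $\bar\omega$-descents along $\pi$ equals $n - 1 - d(\pi)$, so
\[
  \Omega_{P,\bar\omega}(k) \;=\; \sum_\pi \binom{k + d(\pi)}{n}.
\]
The reciprocity then follows by applying the polynomial identity $\binom{-k + n - 1 - d}{n} = (-1)^n \binom{k + d}{n}$ term by term.

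The main obstacle is the first step: carefully verifying that the sort-with-tie-breaking procedure produces both a linear extension of $P$ and a bijection between admissible tuples and the disjoint union of fibers indexed by linear extensions. The delicate point is that admissibility involves \emph{different} inequality types ($\le$ versus $<$) for comparable pairs $u \preceq v$ depending on whether $\omega(u) < \omega(v)$ or $\omega(u) > \omega(v)$; the definition of $\omega$-labeling is precisely calibrated so that matching these against the descent structure along the sorted sequence makes the decomposition work.
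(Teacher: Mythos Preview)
The paper does not prove Theorem~\ref{stanley_op}; it is quoted from Stanley's thesis \cite{stanleythesis} and used as a black box in the proof of Theorem~\ref{main}. So there is no ``paper's own proof'' to compare against.

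That said, your argument is correct and is precisely Stanley's original proof via the $(P,\omega)$-partition decomposition (the Jordan--H\"older set of linear extensions). One small point you flag yourself but should make fully explicit: in verifying that every chain in the fiber of a linear extension $\pi$ is admissible, when $u \preceq v$ with $\omega(u) > \omega(v)$ and $u = \pi(i)$, $v = \pi(j)$ for $i<j$, the sequence $\omega(\pi(i)),\dots,\omega(\pi(j))$ begins above where it ends and hence must contain at least one descent between consecutive positions, forcing $x_u < x_v$. With that detail spelled out, the bijection is complete and the binomial-coefficient reciprocity finishes the argument exactly as you wrote.
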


\noindent
The reciprocity relation given in Theorem \ref{stanley_op} takes on a special form when $\omega$ is a \emph{natural labeling} of $P$, i.e., one that respects the order of $P$.
(It is easy to see that every poset has a natural labeling.)
In this case $\Omega _{P,\omega} (k)$ simply counts all order preserving maps $x: P \to [k]$ (i.e., $u \preceq v \Longrightarrow x_u \le x_v$), whereas $\Omega_{P,\overline{\omega}} (k)$ couns all \emph{strictly} order preserving maps $x: P \to [k]$ (i.e., $u \prec v \Longrightarrow x_u < x_v$). 
Theorem \ref{stanley_op} implies that these two counting functions are reciprocal.

For a mixed graph $G=(V,E,A)$ with $n$ vertices, the weak chromatic polynomial $\chi _G(k)$ can be written as 
\[
  \chi _G(k) = \# \left\{ (x_1, x_2, \dots, x_n) \in [k]^n :
  \begin{array}{l}
    x_u \leq x_v \text{ if } \vec{uv} \in A \\
    x_u \ne x_v \text{ if } uv \in E
  \end{array} \right\} .
\]
Each acyclic orientation of $G$ can be translated into a poset by letting $P = V(G)$ and introducing, for each $u \to v$ in the orientation, the relation $u \preceq v$. 

Throughout the remainder of this section, we fix an acyclic mixed graph $G$ and denote by $G_1, G_2, \ldots, G_m$ the (acyclic) orientations of $G$. For each $1 \le i \le m$, denote $P_i$ as the poset created by the orientation $G_i$, and
let $\phi_{G_i}(k)$ be the number of weak proper $k$-colorings of $G_i$ that are also weak proper $k$-colorings of $G$.

\begin{lemma}\label{phisum}
If $G$ is an acyclic mixed graph, then $\chi_G(k) = \displaystyle\sum_{i=1}^m \phi_{G_i}(k) \, .$
\end{lemma}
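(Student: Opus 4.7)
The plan is to exhibit a bijection between weak proper $k$-colorings of $G$ and pairs $(c, G_i)$ where $c$ is counted by $\phi_{G_i}(k)$, so that the sum $\sum_{i=1}^m \phi_{G_i}(k)$ partitions $\chi_G(k)$. The key point is that a weak proper coloring of $G$ naturally induces a unique orientation of its undirected edges.

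First I would observe that the inequality $\sum \phi_{G_i}(k) \le \chi_G(k)$ is immediate: each coloring counted by $\phi_{G_i}(k)$ is, by definition, a weak proper $k$-coloring of $G$, and I need only check that distinct orientations $G_i \ne G_j$ cannot produce the same coloring. If $c$ were a weak proper coloring of both $G_i$ and $G_j$, then for any undirected edge $uv \in E$ oriented oppositely in the two (say $u \to v$ in $G_i$ and $v \to u$ in $G_j$), we would have both $c(u) \le c(v)$ and $c(v) \le c(u)$, hence $c(u) = c(v)$; but then $c$ fails to be proper on the edge $uv$ of $G$, a contradiction. Hence the counts $\phi_{G_i}(k)$ tally disjoint sets of colorings.

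For the reverse inequality, I would take a weak proper $k$-coloring $c$ of $G$ and construct the orientation it induces: for each $uv \in E$, the properness condition forces $c(u) \ne c(v)$, so I orient the edge from the smaller color to the larger. Combined with the fixed arcs of $A$, this yields some orientation of $G$. Because $G$ is acyclic by hypothesis, this orientation is acyclic and therefore equals some $G_i$. By construction $c$ is a weak proper coloring of $G_i$ (with $c(u) \le c(v)$ holding strictly on all oriented edges and weakly on all arcs), and $c$ is still a weak proper coloring of $G$, so it contributes to $\phi_{G_i}(k)$.

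The main obstacle, though mild, is the acyclicity step: without the assumption that $G$ is acyclic, the orientation induced by $c$ could fail to be acyclic and thus would not appear among $G_1, \dots, G_m$, breaking the bijection. Once this step is justified by the hypothesis, the two inequalities combine to give $\chi_G(k) = \sum_{i=1}^m \phi_{G_i}(k)$.
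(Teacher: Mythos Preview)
Your argument is correct and follows essentially the same two-step approach as the paper: show that the sets of colorings counted by the $\phi_{G_i}$ are pairwise disjoint (via an edge oriented oppositely in $G_i$ and $G_j$), and show that every weak proper coloring of $G$ lands in one of them. The paper compresses the second step to ``it is clear,'' whereas you spell out the induced orientation explicitly; your remark on where acyclicity enters is also accurate, since a cycle in the induced orientation would have to consist entirely of arcs of~$G$.
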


\begin{proof}
It is clear that each weak proper $k$-coloring of $G$ is a weak proper $k$-coloring of $G_i$ for some $1 \le i \le m$.
Conversely, assuming $E(G) \ne \emptyset$, for any $1 \le i < j \le k$, there is some $uv \in E(G)$ such that $u
\to v$ in $G_i$ and $v \to u$ in $G_j$. This implies that there is no weak proper coloring that is a weak proper $k$-coloring of $G_i$ and~$G_j$.
If $E(G) = \emptyset$ then $G$ is the only orientation of itself.
\end{proof}

\begin{lemma}\label{labeling}
For each $G_i$, there exists an $\omega_i$-labeling of $P_i$ such that $$\phi_{G_i}(k) = \Omega_{P_i,\omega_i}(k) \, .$$ Moreover, $\Omega_{P_i,\overline{\omega}_i}(k)$ is the number of $k$-colorings intercompatible with $G_i$.
\end{lemma}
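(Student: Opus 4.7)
The plan is to realize $\omega_i$ as a linear extension of an auxiliary digraph $D_i$ on the vertex set $V$, built as follows: for every arc $u \to v$ of $G_i$ coming from $\vec{uv} \in A(G)$, include $u \to v$ in $D_i$; for every arc $u \to v$ of $G_i$ coming from the orientation of an edge $uv \in E(G)$, include the \emph{reversed} arc $v \to u$ in $D_i$. This choice is dictated by the convention in the order polynomial: $\omega(u) < \omega(v)$ encodes the weak inequality $x_u \le x_v$ and $\omega(u) > \omega(v)$ encodes the strict inequality $x_u < x_v$, while $\phi_{G_i}$ demands the weak inequality exactly on arcs of $A(G)$ and the strict inequality exactly on the oriented edges of $E(G)$ (the latter because every counted coloring must also be a weak proper coloring of the underlying graph~$G$).

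The main obstacle is showing that $D_i$ is acyclic, so that a bijection $\omega_i : V \to [n]$ respecting all arcs of $D_i$ exists. I would argue by contrapositive: a directed cycle in $D_i$ yields a closed sequence $v_1, v_2, \dots, v_\ell, v_1$ in $G$ in which each step is either an arc of $A(G)$ traversed forward or an edge of $E(G)$ oriented backward in $G_i$. Reversing in $G_i$ precisely those edges that appear backward along the cycle produces a new orientation $G'$ of $G$ in which $v_1 \to v_2 \to \cdots \to v_\ell \to v_1$ becomes a directed cycle, contradicting the hypothesis that $G$ is an acyclic mixed graph.

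Given such an $\omega_i$, the identity $\phi_{G_i}(k) = \Omega_{P_i,\omega_i}(k)$ follows by comparing the constraints both systems impose on $(x_1, \dots, x_n) \in [k]^n$. The two systems agree on every cover relation of $P_i$ by construction of $\omega_i$, so it remains only to show that the order-polynomial constraints on non-cover relations $u \prec v$ are already implied. Composing cover inequalities along any chain from $u$ to $v$ in $G_i$ already yields $x_u \le x_v$, and if $\omega_i(u) > \omega_i(v)$ then every such chain must include some oriented edge of $E(G)$, because a pure $A(G)$-chain would force $\omega_i(u) < \omega_i(v)$; the strict cover inequality from that edge then supplies $x_u < x_v$. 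The second assertion follows by applying the same analysis to $\overline{\omega}_i$: since $\overline{\omega}_i(u) < \overline{\omega}_i(v)$ if and only if $\omega_i(u) > \omega_i(v)$, weak and strict inequalities swap on every cover of $P_i$, so arcs from $A(G)$ produce the strict constraint $x_u < x_v$ and arcs from $E(G)$ produce the weak constraint $x_u \le x_v$, which are exactly the intercompatibility conditions for $G_i$.
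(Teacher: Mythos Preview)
Your proof is correct and follows essentially the same route as the paper: your auxiliary digraph $D_i$ is precisely what the paper calls $R_i$ (the orientation of $G$ obtained from $G_i$ by reversing the edges but keeping the arcs), and $\omega_i$ is then taken to be a linear extension of it. You are in fact more careful than the paper about the constraints on non-cover relations of $P_i$; note also that the acyclicity of $D_i$ follows more directly than via your detour through $G'$, since $D_i$ is itself an orientation of the acyclic mixed graph~$G$.
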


\begin{proof}
Given the orientation $G_i$, let $R_i$ be the orientation of $G$ obtained by reversing the orientation of the edges in $G_i$ (but not the arcs). 
We will construct $\omega_i$ recursively.

Since $R_i$ is acyclic, there exists a vertex $v \in V$ such that all edges and arcs incident to $v$ are oriented away from it. 
Set $\omega_i(v) := 1$ and remove $v$ and the arcs incident to $v$.  Since $R_i$ is acyclic, $R_i-v$ must also be acyclic.  
Now repeat, assigning each vertex in the process consecutive $\omega_i$-labels.
This gives $\omega_i$-labels that satisfy
\[
  u \to v \text{ in } R_i
  \qquad \Longrightarrow \qquad
  \omega_i(u) < \omega_i(v) \, ,
\] 
resulting in an $\omega_i$-labeling of $P_i$, the poset corresponding to $G_i$, that satisfies for $u \preceq v$ 
\begin{align*}
  \omega_i(u) < \omega_i(v) \qquad &\Longrightarrow \qquad \vec{uv} \in A(G) \, , \\
  \omega_i(u) > \omega_i(v) \qquad &\Longrightarrow \qquad uv \in E(G) \, .
\end{align*}
So
\begin{align*}
  \Omega_{P_i,\omega_i}(k)
  &= \# \left\{ (x_1, x_2, ... , x_n) \in [k]^n : 
  \begin{array}{l}
    x_u \le x_v \text{ if } u \preceq v \text{ and } \omega_i (u) < \omega_i (v) \\
    x_u < x_v \text{ if } u \preceq v \text{ and } \omega_i (u) > \omega_i (v) 
  \end{array} \right\} \\
  &= \# \left\{ (x_1, x_2, ... , x_n) \in [k]^n : 
  \begin{array}{l}
    x_u \le x_v \text{ if } u \to v \text{ in } G_i \text{ and } \vec{uv} \in A(G) \\
    x_u < x_v \text{ if } u \to v \text{ in } G_i \text{ and } uv \in E(G)
  \end{array} \right\} \\ 
  &= \phi_{G_i}(k) \, .
\end{align*}
For the second part of the proof, recall that
\begin{align*}
  \Omega _{P_i,\overline{\omega_i}} (k)
  &= \# \left\{ (x_1, x_2, ... , x_n) \in [k]^n : 
  \begin{array}{l}
    x_u < x_v \text{ if } u \preceq v \text{ and } \omega_i (u) < \omega_i (v) \\
    x_u \le x_v \text{ if } u \preceq v \text{ and } \omega_i (u) > \omega_i (v) 
  \end{array} \right\} \\
  &= \# \left\{ (x_1, x_2, ... , x_n) \in [k]^n : 
  \begin{array}{l}
    x_u < x_v \text{ if } u \preceq v \text{ and } \vec{uv} \in A(G) \\
    x_u \le x_v \text{ if } u \preceq v \text{ and } uv \in E(G)
  \end{array} \right\} \\
  &= \# \text{ colorings intercompatible with } G_i \, . \qedhere
\end{align*}
\end{proof}

\begin{proof}[Proof of Theorem \ref{main}]
If $G$ is an acyclic mixed graph, then by Lemma \ref{phisum},
\begin{eqnarray*}
\chi_G(-k) &=& \displaystyle\sum_{i=1}^m \phi_{G_i}(-k) \\
&=& \displaystyle\sum_{i=1}^m \Omega_{P_i,\omega_i}(-k) \ \text{ (by Lemma \ref{labeling})}\\
&=& \displaystyle\sum_{i=1}^m (-1)^{|P_i|}\Omega_{P_i,\overline{\omega_i}}(k) \ \text{ (by Theorem \ref{stanley_op})}\\
&=& (-1)^{|V|}\displaystyle\sum_{i=1}^m \Omega_{P_i,\overline{\omega_i}}(k) \, . 
\end{eqnarray*}
By applying Lemma \ref{labeling} again, the proof is completed.
\end{proof}


\section{Deletion--Contraction Computations}\label{delcon}

Let $G=(V,E,A)$ be a mixed graph, $e \in E(G)$, and $a \in A(G)$. Define $G-e = (V,E-e,A)$ as the mixed graph with edge $e$ deleted and $G-a = (V,E,A-a)$ as the mixed graph with arc $a$ deleted. An edge or arc is \emph{contracted} by deleting the edge or arc and identifying the vertices incident to it (keeping only one copy of each edge and arc). Denote $G/e$ as the mixed graph obtained by contracting edge $e$ in $G$ and $G/a$ as the mixed graph obtained by contracting arc $a$ in $G$.
The standard proof for the deletion--contraction formula for (unmixed) graphs gives:

\begin{proposition}\label{DCedge} 
If $G$ is a mixed graph and $e \in E(G)$, then $$\chi_G(k) = \chi_{G-e}(k) - \chi_{G/e}(k) \, .$$
\end{proposition}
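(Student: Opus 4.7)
The plan is to partition the set of weak proper $k$-colorings of $G - e$ according to what happens at the endpoints of $e = uv$, and then identify each block with one side of the claimed identity. Write $w$ for the vertex of $G/e$ obtained by identifying $u$ and $v$.

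First I would observe that a $k$-coloring $c$ of $V(G)$ is a weak proper $k$-coloring of $G$ if and only if it is a weak proper $k$-coloring of $G - e$ and additionally satisfies $c(u) \neq c(v)$, since passing from $G$ to $G - e$ simply relaxes the one edge constraint at $e$ (all arc constraints and all remaining edge constraints are unchanged). Therefore the weak proper $k$-colorings of $G - e$ split into those with $c(u) \neq c(v)$, counted by $\chi_G(k)$, and those with $c(u) = c(v)$, which I will match with $\chi_{G/e}(k)$.

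Next I would set up the bijection between the second block and the weak proper $k$-colorings of $G/e$: given $c$ on $G - e$ with $c(u) = c(v)$, define $\tilde c$ on $V(G/e)$ by $\tilde c(w) := c(u)$ and $\tilde c(x) := c(x)$ for $x \neq u,v$, with the obvious inverse that duplicates the color at $w$. The verification amounts to translating each edge- or arc-constraint incident to $w$ in $G/e$ back into a constraint incident to $u$ or $v$ in $G - e$, using that edge constraints ($\neq$) and arc constraints ($\leq$) at $w$ are respected precisely because $c(u) = c(v)$.

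The main thing to check carefully is the behavior of the contraction itself: parallel edges or arcs produced by the identification are collapsed to a single copy, which does not change any constraint; and a self-arc $\vec{w w}$, which can appear only if $G$ already contained an arc between $u$ and $v$ alongside $e$, imposes the vacuous condition $\tilde c(w) \leq \tilde c(w)$ and so does not affect the count. I expect this bookkeeping to be the only genuine obstacle, and it is dictated by the contraction convention recalled just before the proposition. Once it is settled, adding the two blocks yields $\chi_{G - e}(k) = \chi_G(k) + \chi_{G/e}(k)$, which rearranges to the stated formula.
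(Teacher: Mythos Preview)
Your argument is correct and is exactly the standard deletion--contraction proof the paper invokes: partition the weak proper $k$-colorings of $G-e$ according to whether $c(u)=c(v)$, identify the $c(u)\neq c(v)$ block with colorings of $G$ and the $c(u)=c(v)$ block with colorings of $G/e$. The paper does not spell this out beyond remarking that the usual proof for unmixed graphs carries over, so your write-up (including the bookkeeping for collapsed parallels and possible self-arcs) is just an explicit version of what the authors leave implicit.
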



Define $G_{a}$ as the mixed graph $G$ with arc $a$ directed in the reverse direction. In other words, if $a=\vec{uv}$ then $G_{a} = (V,E,A-\{\vec{uv}\} \cup \{\vec{vu}\})$.  

\begin{proposition}\label{DCarc} 
If $G$ is a mixed graph and $a \in A(G)$, then $$\chi_G(k) + \chi_{G_{a}}(k) = \chi_{G-a}(k) + \chi_{G/a}(k) \, .$$
\end{proposition}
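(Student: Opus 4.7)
The plan is a direct bijective counting argument that partitions all colorings according to the relation between $c(u)$ and $c(v)$, where $a = \vec{uv}$. Concretely, I would fix the arc $a$ and consider the set $C$ of all maps $c\colon V\to[k]$ that satisfy every edge and arc constraint of $G$ \emph{except} possibly the one imposed by $a$. By definition this set is counted by $\chi_{G-a}(k)$, since $G-a$ has exactly those constraints.

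Next, I would split $C$ into three disjoint classes $C_<$, $C_=$, $C_>$ according to whether $c(u)<c(v)$, $c(u)=c(v)$, or $c(u)>c(v)$. The key observations are: a coloring in $C$ is proper for $G$ precisely when $c(u)\le c(v)$, i.e., it belongs to $C_<\cup C_=$; and it is proper for $G_a$ (which replaces $\vec{uv}$ by $\vec{vu}$, leaving all other constraints unchanged) precisely when it lies in $C_>\cup C_=$. Therefore
\[
  \chi_G(k) + \chi_{G_a}(k) \;=\; |C_<| + |C_>| + 2|C_=| \;=\; |C| + |C_=| \;=\; \chi_{G-a}(k) + |C_=|\,.
\]

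The remaining step is to identify $|C_=|$ with $\chi_{G/a}(k)$. I would exhibit the obvious bijection: given $c\in C_=$, let $w$ be the merged vertex of $G/a$ and define $\tilde c(w) := c(u) = c(v)$, with $\tilde c$ unchanged on the other vertices; conversely, any proper $k$-coloring of $G/a$ lifts back to an element of $C_=$ in this way. One needs to check that every edge/arc constraint of $G/a$ corresponds to an edge/arc constraint of $G-a$ involving $u$ or $v$, which is exactly the content of the contraction definition (``keeping only one copy of each edge and arc''). Combining this identification with the displayed equation gives the proposition.

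The only point requiring care — and the most likely obstacle — is the degenerate case where contraction produces loops or parallel edges/arcs between $u$ and $v$ (for instance, if $G$ also has an edge $uv$ or another arc between $u$ and $v$). A loop edge in $G/a$ forces $\chi_{G/a}(k)=0$, but then the corresponding edge $uv$ in $G-a$ likewise makes $C_=$ empty, so the identity $|C_=|=\chi_{G/a}(k)$ still holds; a loop arc contributes a trivially satisfied constraint $c(w)\le c(w)$ on both sides; and parallel edges/arcs collapsed in $G/a$ correspond to identical constraints in $G-a$ under the merge $c(u)=c(v)$. Once these cases are checked, no further machinery (and in particular no reciprocity or poset theory) is needed.
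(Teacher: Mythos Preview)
Your proof is correct and is essentially the same argument as the paper's: the paper phrases it via the inclusion--exclusion identity $|C|+|C_a|=|C\cup C_a|+|C\cap C_a|$ applied to the sets of weak proper colorings of $G$ and $G_a$, which amounts precisely to your trichotomy $C_<,C_=,C_>$ inside the colorings of $G-a$. Your treatment of the degenerate loop/parallel cases is in fact more explicit than the paper's.
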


\begin{proof}
Let $a=\vec{uv}$, $C$ be the set of weak proper $k$-colorings of $G$, and $C_a$ be the set of weak proper $k$-colorings of $G_a$. Therefore, $\chi_G(k) + \chi_{G_{a}}(k) = |C \cup C_a| + |C \cap C_a|$.

A coloring $c \in C \cup C_a$ if and only if $c$ is a weak proper $k$-coloring of $G-a$. A coloring $c \in C \cap C_a$ if and only if $c(u) = c(v)$ and $c$ corresponds to a weak proper $k$-coloring of $G/a$ in which the vertex created by identifying $u$ and $v$ is colored with $c(u)$. Therefore, $\chi_{G-a}(k) = |C \cup C_a|$ and $\chi_{G/a}(k) = |C \cap C_a|.$
%
\end{proof}

Propositions \ref{DCedge} and \ref{DCarc} give the following equations:
\begin{align}
\label{dce} \chi_G(k) &= \chi_{G-e}(k) - \chi_{G/e}(k) \, , \\
\label{dca} \chi_G(k) &= \chi_{G-a}(k) + \chi_{G/a}(k) - \chi_{G_{a}}(k) \, .
\end{align} 
Equation (\ref{dce}) is very useful in computing the weak chromatic polynomials since it recursively gives the weak chromatic polynomial of a mixed graph as a
difference of (in the number of vertices or edges) smaller mixed graphs. On the other hand, equation (\ref{dca}) gives the weak chromatic polynomial of $G$ in terms of $G_a$, which is not a smaller graph. However, we will show how it can be used in computation.

A directed graph $G=(V,\emptyset, A)$ is \emph{strongly connected} if for any pair of vertices $u,v \in V$ there exists a directed path from $u$ to $v$.

\begin{proposition}\label{cycle_contract_lemma}
If $G$ is a strongly connected directed graph, then $\chi_G(k) = k$.
\end{proposition}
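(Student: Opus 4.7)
The plan is to observe that a weak proper $k$-coloring of $G=(V,\emptyset,A)$ is simply a map $c:V\to[k]$ satisfying $c(u)\le c(v)$ whenever $\vec{uv}\in A$, since there are no undirected edges to impose distinctness. The goal, then, is to show that strong connectivity forces every such coloring to be constant, which immediately yields exactly $k$ colorings (one per color choice) and hence $\chi_G(k)=k$.

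First I would fix an arbitrary weak proper $k$-coloring $c$ and any two vertices $u,v\in V$. By strong connectivity there is a directed path $u=w_0\to w_1\to\cdots\to w_r=v$ in $G$. Each arc along the path contributes an inequality $c(w_{i-1})\le c(w_i)$, so chaining them gives $c(u)\le c(v)$. Applying the same argument to a directed path from $v$ back to $u$ yields $c(v)\le c(u)$, so $c(u)=c(v)$. Since $u,v$ were arbitrary, $c$ is constant.

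Conversely, any constant map $c\equiv j$ with $j\in[k]$ trivially satisfies $c(u)\le c(v)$ for every arc, and there are exactly $k$ such maps. Therefore $\chi_G(k)=k$.

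There is essentially no obstacle here: the result is a direct consequence of the definition of weak proper coloring together with the definition of strong connectivity. The only thing to keep straight is that $G$ has no undirected edges, so no distinctness constraint prevents the constant coloring; this is precisely why the proposition is phrased for directed (not mixed) graphs.
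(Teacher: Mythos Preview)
Your proof is correct and follows essentially the same approach as the paper: use strong connectivity to get directed paths in both directions between any pair of vertices, chain the arc inequalities to conclude the coloring is constant, and then count the $k$ constant maps. The only cosmetic difference is that the paper fixes a single base vertex and compares all others to it, while you compare arbitrary pairs; the content is identical.
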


\begin{proof}
Fix $u \in V$, and let $c$ be a weak proper coloring of $G$. Since there is a directed path from $u$ to any $v$ and vice versa, $c(v) \le c(u) \le c(v)$ for every
$v \in V$. Therefore, $c(u) = c(v)$ for every $v \in V$, and since there are $k$ colors that can be assigned to $u$, $\chi_G(k) = k$.
\end{proof}

Given a subgraph $S$ of $G$, denote $G/S$ as the mixed graph $G$ with all edges and arcs of $S$ removed and all
vertices of $S$ identified to one vertex; resulting parallel edges/arcs should be replaced by a single edge/arc.

\begin{proposition}\label{subcontract}
Let $G$ be a mixed graph and $S$ be a strongly connected directed subgraph of $G$. Then $\chi_G(k)=\chi_{G/S}(k)$. 
\end{proposition}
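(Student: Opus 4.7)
The plan is to exhibit a color-preserving bijection between weak proper $k$-colorings of $G$ and those of $G/S$. The crucial input is Proposition \ref{cycle_contract_lemma} applied to $S$ itself: since $S$ is a strongly connected directed (sub)graph, every weak proper $k$-coloring of $S$ is constant on $V(S)$.

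First I would observe that any weak proper $k$-coloring $c$ of $G$ restricts to a weak proper $k$-coloring of $S$, because every arc of $S$ is an arc of $G$ and inherits the coloring constraint. Therefore $c$ takes a single common value $\gamma_c$ on $V(S)$. Letting $v_S$ denote the vertex of $G/S$ obtained by contracting $S$, I would then define the map $\Phi \colon c \mapsto \tilde c$ by setting $\tilde c(w) = c(w)$ for $w \notin V(S)$ and $\tilde c(v_S) = \gamma_c$. The inverse $\Psi$ sends $\tilde c$ to the coloring of $G$ that agrees with $\tilde c$ outside $V(S)$ and assigns $\tilde c(v_S)$ to every vertex of $S$.

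The bulk of the verification is that $\Phi$ and $\Psi$ preserve the proper-coloring constraints, which reduces to a case check on each edge/arc of $G$ according to how many of its endpoints lie in $V(S)$. For edges/arcs with no endpoint in $V(S)$ the constraint is literally the same in $G$ and $G/S$; for those with exactly one endpoint in $V(S)$, that endpoint is replaced by $v_S$ and assigned the same color, so the weak inequality or disequality constraint translates unchanged (parallel edges/arcs produced in the process impose identical constraints and are merged without loss).

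The delicate case, and the one I expect to be the main obstacle, is that of edges or arcs of $G$ whose endpoints both lie in $V(S)$ but which do not belong to $S$. An arc $\vec{uv}$ of this type is vacuously satisfied on both sides (its endpoints share a color in $G$, and the identified endpoints in $G/S$ trivially satisfy the arc inequality), so it poses no issue. An undirected edge $uv$ with $u,v\in V(S)$, however, simultaneously forces $c(u)\ne c(v)$ and $c(u)=c(v)$, hence $\chi_G(k)=0$; correspondingly this edge becomes a self-loop in $G/S$, forcing $\chi_{G/S}(k)=0$, and the asserted equality still holds. Once this case is dispatched, $\Phi$ is a genuine bijection and $\chi_G(k)=\chi_{G/S}(k)$ follows.
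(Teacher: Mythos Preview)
Your proposal is correct and follows essentially the same bijection as the paper's proof: both identify the colorings of $G$ with those of $G/S$ via the observation (from Proposition~\ref{cycle_contract_lemma}) that any weak proper coloring of $G$ is constant on $V(S)$. Your treatment is in fact more careful than the paper's very terse three-sentence argument, particularly in handling the case of edges or arcs of $G$ with both endpoints in $V(S)$ that are not themselves in $S$.
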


\begin{proof}
Let $s$ be the vertex that $S$ contracts to in $G/S$. For each weak proper $k$-coloring of $G$, the vertices of $S$ must all be colored the same color $j$. By defining $c(s) = j$ we get a bijection between the weak proper $k$-colorings of $G$ and $G/S$.
\end{proof}

Computing the weak chromatic polynomial of a mixed graph is reduced to computing the weak chromatic polynomial of smaller directed graphs by applying Proposition
\ref{DCedge}. Computing the weak chromatic polynomial of a directed graph is reduced to computing the weak chromatic polynomial of smaller acyclic directed graphs
(directed trees) by recursively reversing arcs and applying Proposition \ref{DCarc} until a strongly connected subgraph is created and Proposition \ref{subcontract} can be applied. Note that a strongly connected subgraph of a directed graph can be obtained by reversing arcs as long as the underlying graph is not acyclic.

\begin{figure}[h]
\begin{center}
\includegraphics[height=1.75in]{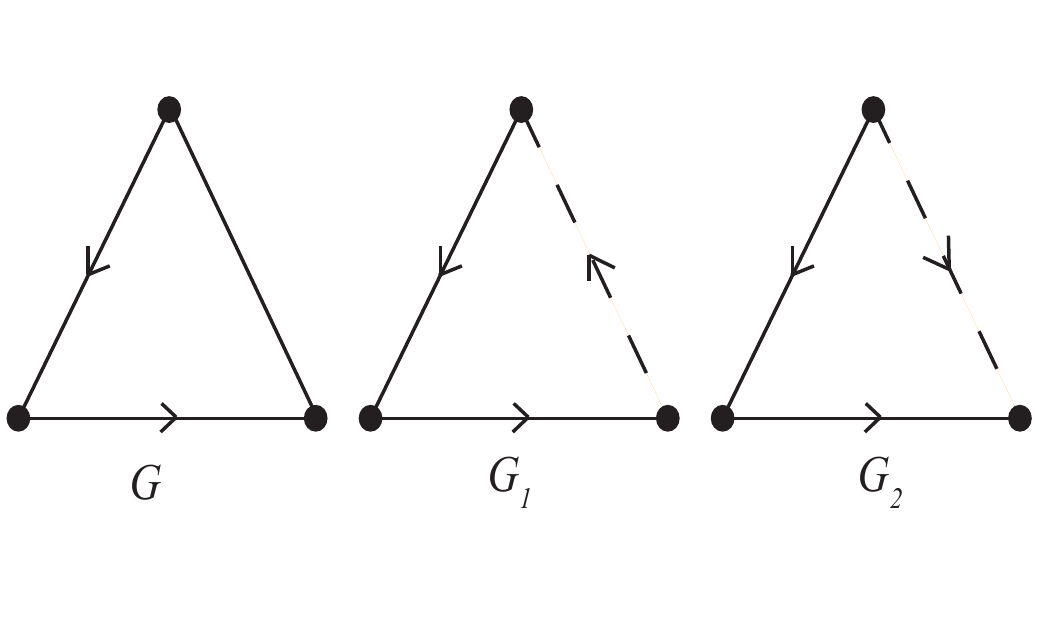}
\end{center}
\caption{A mixed graph $G$ and its two orientations.}
\label{ex1}
\end{figure}

As an example, let $G=(\{u,v,w\}, \{uv\}, \{\vec{vw},\vec{wu} \})$ (shown in Figure \ref{ex1}). $G$ is a cyclic mixed graph since it has an orientation, $G_1$, that contains a directed cycle. Consider $k=2$. If $c$ is an intercompatible coloring of $G_1$ or $G_2$, then $c(v) < c(w) < c(u)$. Therefore, $G_1$ and $G_2$ have no intercompatible colorings and the number of 2-colorings of $G$, each counted with multiplicity equal to the number of intercompatible acyclic orientations of $G$ is~0.

\begin{figure}[h]
\begin{center}
\includegraphics[height=1.5in]{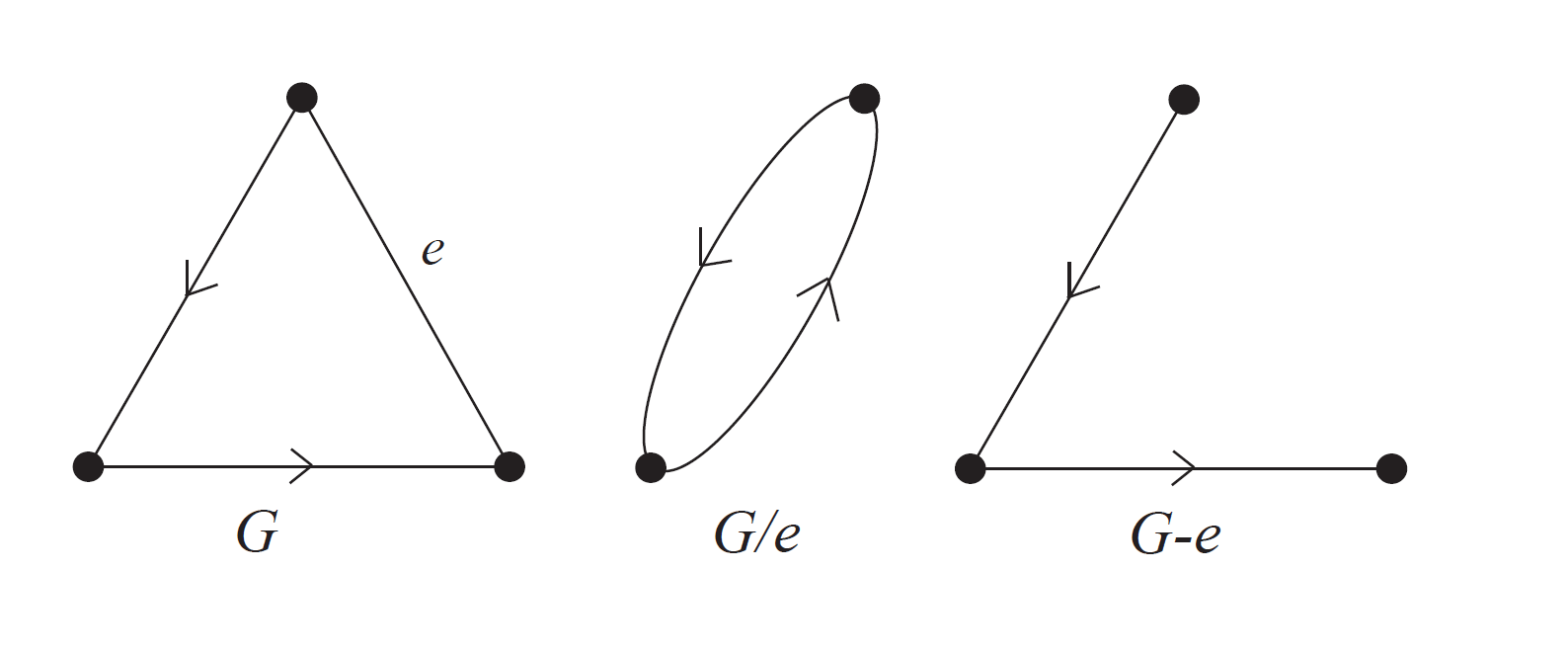}
\end{center}
\caption{$G$ and its contraction and deletion.}
\label{ex2}
\end{figure}

We now use contraction and deletion, with $e=uv$, to compute the weak chromatic polynomial of $G$. The contracted
graph $G/e$ (see Figure \ref{ex2}) is a strongly connected directed graph, so $\chi_{G/e}(k) = k$. In $G-e$, there are $(k-i+1)i$ weak proper $k$-colorings with $c(w)=i$. Therefore,
\[
 \chi_{G-e}(k) = \sum_{i=1}^k (k-i+1)i = \frac{(k+2)(k+1)k}{3}
\]
and so $\chi_G(k) = \frac 1 3 (k+2)(k+1)k - k$. We can now see that Theorem \ref{main} does not hold for $G$ since $\chi_G(-2) = -2$.


\bibliographystyle{plain}
\bibliography{weakMG} 

\end{document}